\documentclass[letterpaper, 10pt, conference]{ieeeconf}

\IEEEoverridecommandlockouts                              	%
\overrideIEEEmargins
\usepackage{amssymb,amsmath,amsfonts}
\usepackage{algorithm,algorithmic}
\usepackage{blkarray}
\usepackage{bbm}
\usepackage{bm}
\usepackage[makeroom]{cancel}
\usepackage{cases}
\usepackage{cite}
\usepackage{changebar}
\usepackage[usenames,dvipsnames]{color}
\usepackage{dsfont}
\usepackage{epsfig}
\usepackage[T1]{fontenc}
\usepackage{float}
\usepackage{flushend}
\usepackage{graphicx}
\usepackage{hyperref} 
\usepackage[latin9]{inputenc}
\usepackage{mathtools}
\usepackage{mathrsfs}
\usepackage{setspace}
\usepackage{soul} 
\usepackage{subfigure}
\usepackage{stfloats}
\usepackage{tikz}
\usepackage{url}
\usepackage{verbatim}
\usepackage{xspace}

\usetikzlibrary{automata,chains,arrows}
\usetikzlibrary{arrows.meta}

\floatstyle{ruled}
\newfloat{model}{H}{mod}
\floatname{model}{\footnotesize Model}
\newfloat{notatio}{H}{not}
\floatname{notatio}{\footnotesize Notation}

\newtheorem{remark}{\bfseries Remark}
\newtheorem{theorem}{\bfseries Theorem}
\newtheorem{lemma}{\bfseries Lemma}
\newtheorem{assumption}{\bfseries Assumption}

\newcommand{\T}{^{\mbox{\tiny T}}}

\let\mathbb=\mathds %

\floatstyle{ruled}
\newfloat{model}{H}{mod}
\floatname{model}{\footnotesize Model}
\newfloat{notatio}{H}{not}
\floatname{notatio}{\footnotesize Notation}

\newenvironment{list4}{
	\begin{list}{$\bullet$}{%
			\setlength{\itemsep}{0.05cm}
			\setlength{\labelsep}{0.2cm}
			\setlength{\labelwidth}{0.3cm}
			\setlength{\parsep}{0in} 
			\setlength{\parskip}{0in}
			\setlength{\topsep}{0in} 
			\setlength{\partopsep}{0in}
			\setlength{\leftmargin}{0.19in}}}
	{\end{list}}

\hyphenation{op-tical net-works semi-conduc-tor}

\begin{document}
	\title{\fontsize{21}{24}\selectfont An Asynchronous Approximate Distributed \\ Alternating Direction Method of Multipliers in Digraphs}
	
	\author{Wei Jiang, %
		Andreas Grammenos, %
		Evangelia Kalyvianaki, %
		and Themistoklis Charalambous%
		\thanks{W. Jiang and T. Charalambous are with the Department of Electrical Engineering and Automation, School of Electrical Engineering, Aalto University, Espoo, Finland.
			Emails: {\tt \{name.surname\}@aalto.fi}.}
		\thanks{A. Grammenos is with the Department of Computer Science and Technology, University of Cambridge, Cambridge, and the Alan Turing Institute, London, UK. Email: {\tt ag926@cl.cam.ac.uk}.}
		\thanks{E. Kalyvianaki is with the Department of Computer Science and Technology, University of Cambridge, Cambridge, UK. Email: {\tt ek264@cl.cam.ac.uk}.}
	}

	\markboth{IEEE TRANSACTIONS ON Automatic Control}%
	{Shell \MakeLowercase{\textit{et al.}}: Bare Demo of IEEEtran.cls for IEEE Journals}
	
	\maketitle
	\thispagestyle{empty}
	\pagestyle{empty}
	
	\begin{abstract}
		In this work, we consider the asynchronous distributed optimization problem in which each node has its own convex cost function and can communicate directly only with its neighbors, as determined by a directed communication topology (directed graph or digraph). First, we reformulate the optimization problem so that Alternating Direction Method of Multipliers (ADMM) can be utilized. Then, we propose an algorithm, herein called Asynchronous Approximate Distributed Alternating Direction Method of Multipliers (AsyAD-ADMM), using finite-time asynchronous approximate ratio consensus, to solve the multi-node convex optimization problem, in which every node performs iterative computations and exchanges information with its neighbors asynchronously. More specifically, at every iteration of AsyAD-ADMM, each node solves a local convex optimization problem for one of the primal variables and utilizes a finite-time asynchronous approximate consensus protocol to obtain the value of the other variable which is close to the optimal value, since the cost function for the second primal variable is not decomposable. If the individual cost functions are convex but not necessarily differentiable, the proposed algorithm converges at a rate of $O(1/k)$, where $k$ is the iteration counter. The efficacy of AsyAD-ADMM is exemplified via a proof-of-concept distributed least-square optimization problem with different performance-influencing factors investigated.
	\end{abstract}
	
	\begin{keywords}
		Distributed optimization, asynchronous ADMM, directed graphs,  ratio consensus, finite-time consensus. 
	\end{keywords}
	
	\section{Introduction}
	\label{sec:introduction}

	In this paper, we are interested in developing a modified ADMM algorithm for solving large-scale optimization problems in digraphs. In that direction, there are two main communication topologies considered: \emph{(i)} master-workers communication topology \cite{zhang2014asynchronous,chang2016asynchronous,li2020synchronous} and \emph{(ii)} multi-node communication topology~\cite{6736937,9146334,khatana2020d,wei_themis_2021}. When the ADMM has a master-worker communication topology, the worker nodes optimize their local objectives and communicate their local variables to the master node which updates the global optimization variable and send it back to the workers. When the ADMM has no master node, the optimization problem is solved over a network of nodes. Herein, we focus on the ADMM realized on multi-node communication topologies.
	
	There are several real-life applications in which synchronous operation is infeasible or costly and complicated. As a result, asynchronous information exchange is necessitated. For example, resource allocation in data centers gives rise to large-scale problems and networks, which naturally call for asynchronous solutions. Recently,  different asynchronous distributed ADMM versions are proposed under different assumptions and communication topologies. For example, the version in~\cite{zhang2014asynchronous,chang2016asynchronous} is based on the master-worker architecture under a star topology. Even though worker updates are parallel and distributed, the master update is centralized as it collects information from workers and broadcasts the updated information to workers for their following iteration. Authors in~\cite{6736937,9146334} proposed another version with the almost surely convergence by using information from a randomly selected subset of nodes for ADMM update under the positive possibility selection assumption.  Then, authors in~\cite{li2020synchronous} combined the above two architecture and designed a master-local processor-link processor architecture with the random node selection. It is worth noting that all the above asynchronous versions are only available for undirected communication graphs, i.e., assuming that every communication link is bidirectional. Very recently, distributed ADMM versions for digraphs are proposed in~\cite{khatana2020d,wei_themis_2021}. However, they are synchronous, which is also the motivation for this work. %
	
	In this paper, we propose AsyAD-ADMM, an asynchronous distributed ADMM algorithm for digraphs.
	First, the optimization problem is restructured such that at every optimization step, each node can solve individually a local convex optimization problem for one of the primal variables, while for the other primal variable the communication among nodes is required, since the cost function is not decomposable. Then, we find the solution for the primal variable whose cost function is non-decomposable by means of a finite-time asynchronous approximate consensus algorithm. More specifically, we adopt the algorithm introduced in~\cite[Algorithm 2]{grammenos2021cpu}. The algorithm relies on asynchronous \texorpdfstring{$\min-$consensus}{min-consensus} and \texorpdfstring{$\max-$consensus}{max-consensus} protocols to compute the approximate average consensus value considering different communication delays over a \emph{finite}  number of steps.
	Unlike \cite{grammenos2021cpu}, the consensus algorithm should be repeated at every optimization step, which is also asynchronous. Towards this end, we propose a mechanism in which every next optimization step is initiated after the previous one has finished.
	The main contributions of the paper are as follows.
	\begin{list4}
		\item We propose an asynchronous distributed ADMM algorithm which can be applied to digraphs.
		\item The convergence analysis shows AsyAD-ADMM converges at a rate of $O(1/k)$, where $k$ is the iteration counter. Factors influencing the algorithm performance are also investigated.
	\end{list4}

	\section{Preliminaries}\label{sec:preliminaries}
	
	\subsection{Notation and graph theory}
	
	The set of real (non-negative integer, positive integer) numbers is denoted by $\mathds{R} $ ($\mathds{Z}, \mathds{N} $)  and   $\mathds{R}^n_+$ denotes the non-negative orthant of the $n$-dimensional real space $\mathds{R}^n$. 
	$A\T$ denotes the transpose of matrix $A$. 
	For $A\in \mathbb{R}^{n\times n}$, $a_{ij}$ denotes the entry in row $i$ and column $j$.
	By $\mathbb{1}$ we denote the all-ones vector and by $I$ we denote the identity matrix (of appropriate dimensions). 
	$ \|\cdot\| $ denotes the 2-norm.
	
	In multi-node systems with fixed communication links (edges), the exchange of information between nodes can be conveniently captured by a graph  $\mathcal{G}(\mathcal{V}, \mathcal{E})$ of order $n$ $(n \geq 2)$, where $\mathcal{V} = \{v_1,v_2,\ldots,v_n\}$ is the set of nodes and $\mathcal{E} \subseteq \mathcal{V} \times \mathcal{V}$ is the set of edges. A directed edge from node $v_i$ to node $v_j$ is denoted by $\varepsilon_{ji} = (v_j, v_i)\in \mathcal{E}$ and represents a communication link that allows node $v_j$ to receive information from node $v_i$. A graph is said to be undirected if and only if $\varepsilon_{ji} \in \mathcal{E}$ implies $\varepsilon_{ij}  \in \mathcal{E}$. A digraph is called \emph{strongly} connected if there exists a path from each vertex $v_i$ of the graph to each vertex $v_j$ ($v_j \neq v_i$). In other words, for any $v_j, v_i \in \mathcal{V}$, $v_j\neq v_i$, one can find a sequence of nodes $v_i = v_{l_1}$, $v_{l_2}$, $v_{l_3}$, $\ldots$, $v_{l_m}=v_j$ such that link $(v_{l_{s+1}}, v_{l_{s}}) \in \mathcal{E}$ for all $s = 1, 2, \ldots, m-1$. The diameter $D$ of a graph is the longest shortest path between any two nodes in the network.
	
	All nodes that can transmit information to node $v_j$ directly are said to be in-neighbors of node $v_j$ and belong to the set $\mathcal{N}^{-}_j=\{ v_i \in \mathcal{V} \; | \; \varepsilon_{ji} \in \mathcal{E} \}$. The cardinality of $\mathcal{N}^{-}_j$, is called the \emph{in-degree} of $v_j$ and is denoted by $\mathcal{D}^{-}_{j}=\left| \mathcal{N}^{-}_j \right|$. The nodes that receive information from node $v_j$ belong to the set of out-neighbors of node $v_j$, denoted by $\mathcal{N}^{+}_j=\{ v_l \in \mathcal{V} \; | \; \varepsilon_{lj} \in \mathcal{E} \}$. The cardinality of $\mathcal{N}^{+}_j$, is called the \emph{out-degree} of $v_j$ and is denoted by $\mathcal{D}^{+}_{j}= \left| \mathcal{N}^{+}_j \right|$.

	In the type of algorithms we consider, we associate a positive weight $p_{ji}$ for each edge $\varepsilon_{ji} \in \mathcal{E} \cup \{ (v_j, v_j) \; | \: v_j \in \mathcal{V} \}$. The nonnegative matrix $P = [p_{ji} ] \in \mathbb{R}_{+}^{n\times n}$ 
	is a weighted adjacency matrix 
	that has zero entries at locations that do not correspond to directed edges (or self-edges) in the graph.

	\subsection{Standard ADMM Algorithm}
	
	The standard ADMM algorithm solves the problem:%
	\begin{equation}\label{standard_ADMM_obj}
	\begin{aligned}
	\min \, &f(x) + g(z) \\
	\text{s.t.} \, &Ax + Bz = c
	\end{aligned}
	\end{equation}
	for variables $ x \in \mathbb{R}^{p}, z \in \mathbb{R}^{m} $ with matrices $ A \in \mathbb{R}^{q\times p}, B \in \mathbb{R}^{q\times m} $ and vector $ c \in \mathbb{R}^{q}$ ($p, m, q\in \mathbb{N}$).  The augmented Lagrangian is
	\begin{equation}\label{augmented_Lagrangian}
	\begin{aligned}
	L_{\rho}(x,z,\lambda) =& f(x) + g(z)+ \lambda^{T}(Ax + Bz - c) \\&+ \frac{\rho}{2}\|Ax + Bz - c\|^{2},
	\end{aligned}
	\end{equation}
	where $ \lambda $ is the Lagrange multiplier and $ \rho >0 $ is a penalty parameter. In ADMM, the primary variables $ x, z $ and the Lagrange multiplier $ \lambda $ are updated as follows: starting from some initial vector $\begin{bmatrix}x^0 & z^0 & \lambda^0 \end{bmatrix}\T $, at each optimization iteration $ k $, 
	\begin{align}
	x^{k+1}=&  \operatorname*{argmin}_x L_{\rho}(x,z^k,\lambda^k), \label{admm_x}\\
	z^{k+1} =& \operatorname*{argmin}_z L_{\rho}(x^{k+1},z,\lambda^k), \label{admm_z}\\
	\lambda^{k+1} =& \lambda^k + \rho (Ax^{k+1} + Bz^{k+1} - c) \label{admm_lamda}.
	\end{align}
	The step-size in the Lagrange multiplier update is the same as the augmented Lagrangian function parameter $ \rho $.

	\section{Problem Formulation}\label{sec:formulation}
	
	In this work, we consider a strongly connected digraph $\mathcal{G}(\mathcal{V}, \mathcal{E})$ in which each node $v_i \in\mathcal{V}$ is endowed with a scalar cost function $f_i : \mathbb{R}^p \mapsto \mathbb{R}$ assumed to be known to the node only. We assume that each node $v_i$ has knowledge of the number of its out-going links, $\mathcal{D}^{+}_{i}$, and has access to local information only via its communication with the in-neighboring nodes, $\mathcal{N}^{-}_{i}$. The only global information available to all the nodes in the network is given in Assumption~\ref{assup_graph}.
	
	\begin{assumption}\label{assup_graph}
		The diameter of the network $D$, or an upper bound, is known to all  nodes.
	\end{assumption}
	
	While Assumption~\ref{assup_graph} is limiting, there exist distributed methods for extracting such information; see, e.g., \cite{2012:Allerton_Shames,ThemisTCNS:2016}.
	
	The objective is to design a discrete-time coordination algorithm that allows every node $v_i$ in a digraph to distributively and asynchronously solve the following global optimization problem:
	\begin{equation}\label{problem_initial}
	\operatorname*{min}_{x\in \mathbb{R}^{p}}  \sum_{i=1}^{n} f_i(x),
	\end{equation}
	where $ x\in \mathbb{R}^{p} $ is a global optimization variable (or a common decision variable). In order to solve problem and  to enjoy the structure ADMM scheme at the same time, a separate decision variable $ x_i $ for node $v_i$ is introduced and the constraint $ \| x_i - x_j\| \le \epsilon, \epsilon > 0, \forall v_i, v_j\in\mathcal{V} $ is imposed to allow the asynchronous distributed ADMM framework. Thus, problem~\eqref{problem_initial} is reformulated as
	\begin{align}
	\min_{x_i, i=1,\ldots, n} \, &\sum_{i=1}^{n} f_i(x_i), \label{problem_reformulated}\\
	\text{s.t.}  \, &\| x_i - x_j\| \le \epsilon, \forall v_i, v_j\in\mathcal{V}, \label{constaint1}
	\end{align}
	where $ \epsilon $ is a predefined error tolerance.
	Define a closed nonempty convex set $ \mathcal{C} $ as
	\begin{equation}\label{setC}
	\mathcal{C} = \left\{\begin{bmatrix} x_1\T & x_2\T & \ldots & x_n\T\end{bmatrix}\T \in \mathbb{R}^{np} \, :\,  \| x_i - x_j\| \le \epsilon \right\} .
	\end{equation}
	By denoting $ X \coloneqq \begin{bmatrix} x_1\T & x_2\T & \ldots & x_n\T\end{bmatrix}\T $ and making variable $ z \in \mathbb{R}^{np} $ as a copy of vector $ X $, constraint~\eqref{constaint1} related to problem~\eqref{problem_reformulated} becomes
	\begin{equation}\label{problem_reformulated2}
	\text{s.t.}  \,  X = z, \, z \in \mathcal{C}. 
	\end{equation}
	Then, define $  g(z) $ as the indicator function of set $ \mathcal{C}  $ as 
	\begin{equation}\label{indicator_function}
	g(z) =
	\left\{ 
	\begin{array}{l}
	\begin{aligned}
	&0,\quad \text{if} \, z \in \mathcal{C},\\
	&\infty, \, \text{otherwise}.
	\end{aligned}
	\end{array}
	\right. 
	\end{equation}
	Finally, problem~\eqref{problem_reformulated} with constraint~\eqref{problem_reformulated2} is transformed to 
	\begin{equation}\label{objective_function}
	\begin{aligned}
	\min_{z, x_i, i=1,\ldots, n} \, & \left\{ \sum_{i=1}^{n} f_i(x_i) + g(z) \right\}, \\
	\text{s.t.} \, & X - z = 0. 
	\end{aligned}
	\end{equation}

	For notational convenience, denote $ F(X) \coloneqq \sum_{i=1}^{n} f_i(x_i) $. Thus, denote the Lagrangian function as
	\begin{equation}\label{Lagrangian}
	L(X,z,\lambda) = F(X) + g(z) + \lambda\T(X - z ),
	\end{equation}
	where $ \lambda  \in  \mathbb{R}^{np} $ is the Lagrange multiplier. The following standard assumptions are required for problem~\eqref{objective_function}.
	\begin{assumption}\label{assup_convex}
		Each cost function $ f_i : \mathbb{R}^{p} \rightarrow \mathbb{R} \cup \{+\infty\} $ is closed, proper and convex.
	\end{assumption}
	\begin{assumption}\label{assup_saddel_point}
		The Lagrangian $ L(X,z,\lambda) $ has a saddle point, i.e., there exists a solution $ (X^{*},z^{*},\lambda^{*}) $, for which 
		\begin{equation}\label{saddle_point}
		L(X^{*},z^{*}, \lambda)\le L (X^{*},z^{*},\lambda^{*})\le L(X,z,\lambda^{*}),
		\end{equation}
		holds for all $ X $ in $ \mathbb{R}^{np} $, $ z $ in $ \mathbb{R}^{np} $ and $ \lambda $ in $ \mathbb{R}^{np} $.
	\end{assumption}

	Assumption~\ref{assup_convex} allows $ f_i $ to be non-differentiable~\cite{boyd2011distributed}. By Assumptions~\ref{assup_convex}-\ref{assup_saddel_point} and based on the definition of $ g(z) $ in~\eqref{indicator_function}, $ L(X,z,\lambda^{*}) $ is convex in $ (X,z) $ and $ (X^{*},z^{*}) $ is a solution to problem~\eqref{objective_function}~\cite{boyd2011distributed,wei2012distributed}.
	
	At iteration $ k $, the corresponding augmented Lagrangian of optimization problem~\eqref{objective_function} is written as
	\begin{align}\label{augmented_Lagrangian2}
	L_{\rho}&(X^{k},z^{k},\lambda^{k}) \\
	=& \sum_{i=1}^{n} f_i(x_i^{k}) + g(z^{k}) + {\lambda^{k}}\T(X^{k} - z^{k} )+ \frac{\rho}{2}\|X^{k} - z^{k} \|^{2} \nonumber \\
	=&  \sum_{i=1}^{n}\left( f_i(x_i^{k}) + {\lambda_i^{k}}\T(x_i^{k} - z_i^{k}) + \frac{\rho}{2}\|x_i^{k} - z_i^{k} \|^{2} \right) +g(z^{k}) , \nonumber
	\end{align}
	where $ z_i\in \mathbb{R}^{p}  $ is the $ i$-th element of vector $ z $. By ignoring terms which are independent of the minimization variables (i.e., $ x_i, z $), for each node $ v_i $, the standard ADMM updates \eqref{admm_x}-\eqref{admm_lamda} change to the following format:
	\begin{align}
	x_i^{k+1} =&  \operatorname*{argmin}_{x_i} f_i(x_i) + {\lambda_i^{k}}\T x_i + \frac{\rho}{2}\|x_i - z_i^{k} \|^{2}, \label{dadmm_x}\\
	z^{k+1} =& \operatorname*{argmin}_z g(z) + {\lambda^{k}}\T(X^{k+1} - z )  + \frac{\rho}{2}\|X^{k+1} - z \|^{2}\nonumber \\
	=& \operatorname*{argmin}_z g(z) +\frac{\rho}{2}\|X^{k+1} - z + \frac{1}{\rho}\lambda^{k} \|^{2}, \label{dadmm_z}\\
	\lambda_i^{k+1} =& \lambda_i^{k} + \rho (x_i^{k+1} - z_i^{k+1}) \label{dadmm_lamda},
	\end{align}
	where the last term in~\eqref{dadmm_z} comes from the identity $ 2a^Tb + b^2=(a+b)^2-a^2 $ with $ a = \lambda^{k}/\rho $ and $ b = X^{k+1} - z $.

	Update~\eqref{dadmm_x} for $ x_i^{k+1} $ can be solved by a classical method, e.g., the proximity operator~\cite[Section 4]{boyd2011distributed}. Update~\eqref{dadmm_lamda} for the dual variable $ \lambda_i^{k+1} $  can be implemented trivially by node $ v_i $. 
	Note that both updates can be done independently and in parallel by node $ v_i $.

	Since $ g $ in~\eqref{indicator_function} is the indicator function of the closed nonempty convex set $ \mathcal{C}  $,  update~\eqref{dadmm_z} for $ z^{k+1} $ becomes 
	\begin{align}
	    z^{k+1} = \Pi_{\mathcal{C}}(X^{k+1}+ \lambda^{k}/\rho) , 
	\end{align}
	where $ \Pi_{\mathcal{C}} $ denotes the projection (in the Euclidean norm) onto $ \mathcal{C} $.
	Intuitively, from \eqref{dadmm_z} and the definition of $ g(z) $ in~\eqref{indicator_function}, one can see that the elements of $ z $ (i.e., $ z_1, z_2, \ldots, z_n $)  should go into $ \mathcal{C} $ in finite time. If not, one will have $  g(z)  = \infty $ and update~\eqref{dadmm_z} will never be finished. Then, from the definition of $ \mathcal{C} $ in~\eqref{setC}, one can see that $ z $ going into $ \mathcal{C} $ means $  \| z_i - z_j\| \le \epsilon, \forall v_i, v_j\in\mathcal{V} $. %
	
	It is worth noting that if $  z_i - z_j =0, \forall v_i, v_j\in\mathcal{V} $, it means $  z_1= z_2= \ldots = z_n $, which is in the mathematical format of consensus.
	In other words, each node $v_i\in\mathcal{V}$ can have $z_i$ reach $ \frac{1}{n} \sum_{i=1}^{n}z_i(0), z_i(0) = x_i^{k+1} + \lambda_i^{k}/\rho  $ in a finite number of steps, i.e., finite-time average consensus.
	Similarly, requiring $  \| z_i - z_j\| \le \epsilon, \forall v_i, v_j\in\mathcal{V} $ means asking nodes $v_i, v_j \in\mathcal{V}$ to have $z_i, z_j$ with $ \epsilon $ closeness to the average consensus, i.e., to have $ z_i, z_j $ enter a circle with its center at $ \frac{1}{n} \sum_{i=1}^{n} (x_i^{k+1} + \lambda_i^{k}/\rho)  $ and its radius as $ \epsilon $. Here we call it finite-time ``approximate'' ratio consensus.

	\section{Asynchronous operation and consensus algorithms}
	\label{sec:asynchrony}
	
	Before we proceed to the description of our main algorithm, we describe the form of asynchrony that the nodes experience and some consensus algorithms that are key ingredients for the operation of our proposed algorithms.
	
		\subsection{Asynchrony description}\label{preliminary_asynchrony}
	
	 Let $t(0)\in \mathbb{R}_{+}$ the time at which the iterations for the optimization start. We assume that there is a set of times $\mathcal{T}=\{t(1), t(2),t(3),\ldots\}$ at which one or more nodes transmit some value to their neighbors. In a synchronous setting, \emph{each} node $v_i$ updates and sends its information to its neighbors at discrete times in $\mathcal{T}$ and no processing or communication delays are considered. In an asynchronous setting, a message that is received at time $t(k_1)$ and processed at time $t(k_2)$, $k_2>k_1$, experiences a process delay of $t(k_2)-t(k_1)$ (or a time-index delay $k_2-k_1$). In Fig.~\ref{fig:async}, we show through a simple example how the time steps evolve for each node in the network; with $t_i(k)$ we denote the time step at which iteration $k$ takes place for node $v_{i}$.
	\begin{figure}[ht]
		\includegraphics[width=0.97\columnwidth]{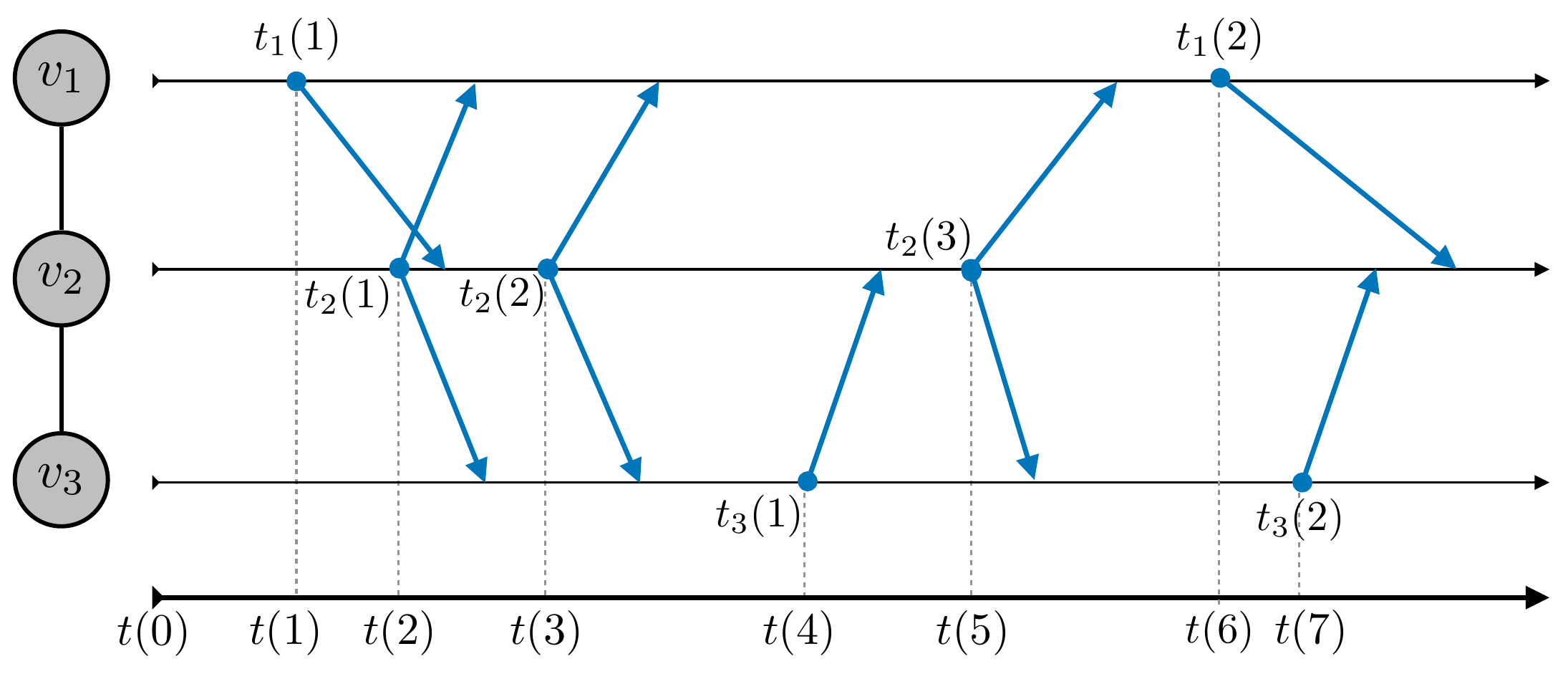}
		\caption{
			A simple example of a network consisting of 3 nodes. 
			In the timeline of each node, blue ticks indicate an iteration for node $v_{i}$ and the arrows indicate the transmissions. 
			The time in between transmissions is the processing delay, while the time from the beginning of the transmission to the end (arrow) is the transmission delay.
		}
		\label{fig:async}
	\end{figure}
	
	\noindent We index nodes' information states and any other information at time $t(k)$ by time-index $k$. Hence, we use $x_i^k \equiv x_i[k]  \in \mathbb{R}$ to denote the information state of node $i$ at time $t(k)$. Note that ${x_i^{k}}\T$ denotes (is equivalent to) $x_i[k]\T$.
	
	\begin{assumption}\label{assump_time_index_bound}
		There exists an upper bound $B, B \in \mathbb{N}$, on the time-index steps that is needed for a node to process the information received from other nodes.
	\end{assumption}
	
	Assumption~\ref{assump_time_index_bound} basically states that since the number of nodes is finite and they update their state regularly, there exists a finite number of steps $B$ before which all nodes have updated their values (and hence broadcast it to neighboring nodes). However, it is not possible for nodes to count the number of steps elapsed in the network. For this reason we make an additional assumption.
	\begin{assumption}\label{assump_time_bound}
		There exists an upper bound $T, T \in \mathbb{R}_{\geq 0}$, on the actual time in seconds that is needed for a node to process the information received from other nodes.
	\end{assumption}
	In other words, the upper bound $B$ steps in Assumption~\ref{assump_time_index_bound} is translated to an upper bound of $T$ in seconds, something that the nodes can count individually.
	
	\subsubsection{Asynchronous ratio consensus}\label{arc}
	Here, we regard the asynchronous information exchange among nodes as delayed information. Thus, we adopt a protocol developed in \cite{2014:chrisThemisTAC} where each node updates its information state $x_{j}^{k+1}$ by combining the delayed information received by its neighbors $x_{i}^s$ ($s\in \mathbb{Z}, s\leq k,~v_{i}\in \mathcal{N}^{-}_j$) using constant positive weights $p_{ji}$. Integer $\tau_{ji}^k \geq 0$ is used to represent the delay of a message sent from node $v_{i}$ to node $v_{j}$ at time instant $k$. We require that $0\leq \tau_{ji}^k \leq \bar{\tau}_{ji} \leq \bar{\tau}$ for all $k\geq 0$ for some finite $\bar{\tau} = \max\{ \bar{\tau}_{ji} \}$, $\bar{\tau} \in \mathbb{N}$ and $ 1+\bar{\tau} \le B $ where $ B $ is defined in Assumption~\ref{assump_time_index_bound}. We make the reasonable assumption that $\tau_{jj}^k=0$, $\forall v_{j} \in \mathcal{V}$, at all time instances $k$ (i.e., the own value of a node is always available without delay). Each node updates its information state according to:
	\begin{align}
	y_{j}^{k+1} =p_{jj}y_{j}^k + \sum_{v_{i} \in \mathcal{N}^{-}_j} \sum_{r=0}^{\bar{\tau} } p_{ji} y_{i}^{k-r}I_{k-r,ji}^{r}, \label{delay_rc_y_update}
	\end{align}
	\noindent for $k\geq 0$, where  $y_{j}^0 \in \mathbb{R}$ is the initial state of node $v_{j}$; $p_{ji}$ $\forall \varepsilon_{ji} \in \mathcal{E}$  form $P=[p_{ji}]$ that adheres to the graph structure, and is primitive column stochastic; and
	\begin{align}\label{eq:indicatorfunction}
	I_{k,ji}(\tau) =
	\begin{cases} 1, & \text{if $\tau_{ji}^k =\tau$,}
	\\
	0, &\text{otherwise.}
	\end{cases}
	\end{align}

	\begin{lemma}\cite[\emph{Lemma 2}]{2014:chrisThemisTAC}
		\label{our_lemma}
		Consider a strongly connected digraph $\mathcal{G}(\mathcal{V}, \mathcal{E})$. Let $y_{j}^k$ and $w_{j}^k$ (for all $v_{j} \in \mathcal{V}$ and $k=0,1,2,\ldots$) be the result of the iterations \eqref{delay_rc_y_update} and
		\begin{align}\label{eq:ratio-delays}
		w_{j}^{k+1} &=p_{jj}w_{j}^k + \sum_{v_{i} \in \mathcal{N}^{-}_j} \sum_{r=0}^{\bar{\tau} } p_{ji} w_{i}^{k-r}I_{k-r,ji}^{r} \; ,
		\end{align}
		where $p_{lj} = \frac{1}{1 + \mathcal{D}_j^+}$ for $v_l \in \mathcal{N}_j^+ \cup \{ v_j \}$ (zeros otherwise)
		with $y^0=[{y_1^0}\T, {y_2^0}\T, \ldots, {y_{|\mathcal{V}|}^0}\T]^T$ and  $w^0=\mathbb{1}$; $I_{k,ji}$ is an indicator function that captures the bounded delay $\tau_{ji}^k$ on link $(v_{j}, v_{i})$ at iteration $k$ (as defined in \eqref{eq:indicatorfunction}, $\tau_{ji}^k \leq \bar{\tau}$). Then, the solution $\displaystyle z_{j}^k={y_{j}^k}/{w_{j}^k}$ to the average consensus problem can be asymptotically obtained as
		$
		\lim_{k\rightarrow \infty} z_{j}^k=\frac{\sum_{v_\ell \in \mathcal{V}} y_{\ell}^0}{|\mathcal{V}|} \; , \; \forall v_{j} \in \mathcal{V} \; .
		$
	\end{lemma}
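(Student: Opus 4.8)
The plan is to convert the delayed, asynchronous recursions into an equivalent \emph{delay-free} linear system on an augmented graph, and then invoke ergodicity of backward products of column-stochastic matrices. First I would introduce, for every ordered pair $(v_j, v_i)$ with $v_i \in \mathcal{N}_j^-$ and every admissible delay $r \in \{1,\ldots,\bar{\tau}\}$, a virtual buffer node that stores a value node $v_i$ has transmitted but node $v_j$ has not yet processed. Stacking the $n$ real states together with the $O(|\mathcal{E}|\,\bar{\tau})$ virtual states into augmented vectors $\tilde{y}^k$ and $\tilde{w}^k$, the updates \eqref{delay_rc_y_update}--\eqref{eq:ratio-delays} can be rewritten as $\tilde{y}^{k+1} = \tilde{P}^k \tilde{y}^k$ and $\tilde{w}^{k+1} = \tilde{P}^k \tilde{w}^k$, where $\tilde{P}^k$ is a nonnegative delay-free matrix: the mass a node would have sent with delay $r$ is instead routed into the appropriate buffer, and each buffer shifts its content one step closer to release. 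The crucial observation is that both recursions are driven by the \emph{same} matrix $\tilde{P}^k$, so the ratio will cancel the common mass-transport dynamics.

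Next I would verify two structural properties of $\{\tilde{P}^k\}$. \textbf{Column stochasticity:} since $P=[p_{ji}]$ with $p_{lj}=\frac{1}{1+\mathcal{D}_j^+}$ is column stochastic and the buffering neither creates nor destroys mass (every unit leaving a node either reaches a neighbor or waits in a buffer), each $\tilde{P}^k$ is column stochastic; hence total mass is conserved, giving $\mathbb{1}\T \tilde{y}^k = \sum_{\ell} y_\ell^0$ and $\mathbb{1}\T \tilde{w}^k = \sum_\ell w_\ell^0 = |\mathcal{V}|$, where the buffer entries of $\tilde{y}^0, \tilde{w}^0$ are zero. \textbf{Primitivity over windows:} because $\mathcal{G}$ is strongly connected, every delay is bounded by $\bar{\tau}$ with $1+\bar{\tau}\le B$, and $p_{jj}>0$ supplies self-loops, the augmented graph is strongly connected and aperiodic; consequently the backward product $\Phi(k,s)\coloneqq \tilde{P}^{k-1}\cdots\tilde{P}^{s}$ over any window of length proportional to $B$ admits a strictly positive row on the real-node block. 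This scrambling condition is what drives the ergodic behaviour.

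Given these properties, I would apply the standard ergodicity theorem for inhomogeneous products of primitive column-stochastic matrices: the backward products converge to a rank-one limit, $\lim_{k\to\infty}\Phi(k,0)=\pi\,\mathbb{1}\T$ with $\pi\ge 0$, $\mathbb{1}\T\pi=1$, and $\pi_j>0$ on every real node. Therefore $\tilde{y}^k \to \pi\,(\mathbb{1}\T\tilde{y}^0)=\pi\sum_\ell y_\ell^0$ and $\tilde{w}^k \to \pi\,|\mathcal{V}|$, and reading off the real component indexed by $v_j$ yields
\begin{align}
\lim_{k\to\infty} z_j^k = \lim_{k\to\infty}\frac{y_j^k}{w_j^k} = \frac{\pi_j \sum_\ell y_\ell^0}{\pi_j\,|\mathcal{V}|} = \frac{\sum_{v_\ell\in\mathcal{V}} y_\ell^0}{|\mathcal{V}|},
\end{align}
where the positivity $\pi_j>0$ is exactly what licenses the cancellation.

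I expect the main obstacle to be establishing the primitivity (scrambling) of the window products of the \emph{time-varying} augmented matrices. Because the delays $\tau_{ji}^k$ change with $k$, the $\tilde{P}^k$ are drawn from a family rather than being a fixed matrix, so one cannot simply raise a single primitive matrix to a power; instead I must argue that within any $B$ consecutive steps, for each pair of real nodes there is an admissible time-respecting path---possibly threading through buffer nodes---realized by whatever delays actually occur, and that the self-loops keep the relevant products aperiodic. Handling this uniformly over all delay realizations, and converting it into a uniform positive lower bound on the entries of $\Phi(k,s)$ as required by the ergodicity theorem, is the technical heart of the argument.
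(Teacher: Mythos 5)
This lemma is imported verbatim from \cite[Lemma 2]{2014:chrisThemisTAC} and the paper supplies no proof of its own; the argument in that reference is precisely the route you describe --- virtual buffer nodes for each edge and each admissible delay, identical column-stochastic augmented dynamics driving both $y$ and $w$, mass conservation, and ergodicity of the backward products --- so your proposal is correct and takes essentially the same approach. The one refinement worth noting is that for inhomogeneous products one only gets weak ergodicity, i.e.\ $\Phi(k,0)-\pi^{k}\mathbb{1}\T\to 0$ with a possibly $k$-dependent vector $\pi^{k}$ whose real-node entries are uniformly bounded away from zero (thanks to the self-loops and the bounded-delay window), which still suffices since the same $\pi_j^{k}$ cancels in the ratio $y_j^k/w_j^k$.
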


	\subsubsection{Asynchronous \texorpdfstring{$\max-$consensus}{max-consensus}}\label{amc}
	
	When the updates are asynchronous, for any node $v_{j}\in \mathcal{V}$, the update rule is as follows \cite{2013:Giannini}:
	\begin{align*}
	y_j[t_j(k+1)] = \max_{v_{i}\in \mathcal{N}_j^{-}[t_j(k+1)] \cup \{v_{j}\}}\{ y_i[t_j(k)+\theta_{ij}(k)] \},
	\end{align*}
	where $y_i[t_j(k)+\theta_{ij}(k)]$ are the states of the in-neighbors $\mathcal{N}_j^{-}[t_j(k+1)]$ available at the time of the update.  Variable $\theta_{ij}(k) \in \mathbb{R}$, evaluated with respect to the update time $t_j(k)$ (defined in Sec.~\ref{preliminary_asynchrony}), is used here to express asynchronous state updates occurring at the neighbors of node $v_{j}$, between
	two consecutive updates of the state of node $v_{j}$.
	It has been shown in \cite[Lemma 5.1]{2013:Giannini} that this algorithm converges to the maximum value among all nodes in a finite number of steps $s$, $s \leq B D$, where $ D $ and $ B $ are defined in Assumptions \ref{assup_graph} and~\ref{assump_time_index_bound}, respectively.

	\section{Main results}
	\label{sec:results}
	
    Our proposed algorithm works on two levels: \emph{i)} at the optimization level in which the distributed ADMM procedure is described for updating $x_i^{k+1}$, $z_i^{k+1}$ and $\lambda_i^{k+1}$ (Algorithm~\ref{algorithm:1}), and
        \emph{ii)} at the distributed coordination level for computing $z_i^{k+1}$ in an asynchronous manner (Algorithm~\ref{Algorithm_finitetimeRCwDelays}). Algorithm~\ref{Algorithm_finitetimeRCwDelays} is executed within Algorithm~\ref{algorithm:1}.
	
	\subsection{Algorithm~\ref{algorithm:1}}
	
	Unlike asynchronous operation which is depicted in Fig.~\ref{fig:async}, in a synchronous algorithm all nodes need to agree on the update time $t(k)$, which usually requires synchronization among all nodes or the existence of a global clock.
	Whether the distributed ADMM update is synchronous or asynchronous arises not only when exchanging information for computing $ z^{k+1} $ using~\eqref{dadmm_z}, but also when they start the next optimization step (i.e., when optimization step $k$ ends and step $k+1$ begins). For now, we assume that the transition in the optimization steps is somehow synchronized. We will discuss ways to achieve it later.
	
	Our AsyAD-ADMM algorithm to solve optimization problem~\eqref{objective_function} is summarized in Algorithm ~\ref{algorithm:1}. In Algorithm~\ref{algorithm:1}, at every optimization step, each node $v_{i}$ does the following:
	\begin{list4}
	    \item It computes $x_i^{k+1}$ using Eq.~\eqref{dadmm_x} locally.
	    \item It computes $z_i^{k+1}$ via Algorithm~\ref{Algorithm_finitetimeRCwDelays} in an asynchronous fashion and in a finite number of steps, since Algorithm~\ref{Algorithm_finitetimeRCwDelays} deploys a termination mechanism.
	    \item It computes $\lambda_i^{k+1}$ using Eq.~\eqref{dadmm_lamda} and the values of $x_i^{k+1}$ and $z_i^{k+1}$. 
	\end{list4}

	For the optimization, we assume that all nodes are aware of 
	the network diameter $D$ (or an upper bound on $D$), an upper bound on time-steps $1+\bar{\tau}\leq B$,
	the augmented Lagrangian function parameter $\rho$, and the ADMM maximum  optimization step $k_{\max}$\footnote{Note that the ADMM stopping criterion is the primal and dual feasibility condition in~\cite{boyd2011distributed}.}.

	\begin{algorithm}[t!]
		\caption{AsyAD-ADMM: Asynchronous Approximate Distributed ADMM}
		\begin{algorithmic}[1]
			\STATE \textbf{Input:} A strongly connected digraph $\mathcal{G}(\mathcal{V}, \mathcal{E})$, in which each node  $v_{i} \in \mathcal{V}$ knows its out-degree $\mathcal{N}_i^{+}$, $\rho >0$, network diameter $D$ (or an upper bound), upper bound on the number of time-steps $\bar{\tau}$, error tolerance $ \epsilon $, $ k_{\max} $ (ADMM maximum number of iterations)
			\STATE \textbf{Initialization:} At $k=0$, each node $ v_i \in \mathcal{V}$ sets $ x_i^0$, $z_i^0$, and $\lambda_i^0$ randomly.
			\STATE Node $ v_i \in \mathcal{V}$ does the following:
			\WHILE {$ k \le k_{\max} $ }
			\STATE Compute $x_i^{k+1}$ using Eq.~\eqref{dadmm_x}
			\STATE Compute $z_i^{k+1}$ via Algorithm~\ref{Algorithm_finitetimeRCwDelays} by setting $ y_i^0 = x_i^{k+1} + \lambda_i^{k}/\rho $ and using $ D$, $\bar{\tau}$, and $\epsilon $.
			\STATE Compute $\lambda_i^{k+1}$ using Eq.~\eqref{dadmm_lamda}
			\IF{ADMM stopping criterion is satisfied}
			\STATE Stop AsyAD-ADMM
			\ENDIF
			\STATE $k\leftarrow k+1$ 
			\ENDWHILE
		\end{algorithmic}
		\label{algorithm:1}
	\end{algorithm}

	\subsection{Algorithm~\ref{Algorithm_finitetimeRCwDelays}}
	\label{sec:distributed_synchronous}

		Under Assumption \ref{assup_graph}, Cady \emph{et al.} in \cite{2015:Cady} proposed an algorithm which is based on the synchronous ratio-consensus protocol \cite{2010:christoforos} and takes advantage of synchronous $\min$- and $\max$-consensus iterations to allow the nodes to determine the time step, $k_0$, when their ratios,  
	(i.e., $\{z_{i}^{k_0} | v_{i}\in \mathcal{V} \}$ in ADMM), are within $\epsilon$ of each other. However, the algorithm in \cite{2010:christoforos} cannot deal with asynchronous information exchange scenarios. To circumvent this problem, and inspired by Cady \emph{et al.} in \cite{2015:Cady}, the authors in \cite{Salapaka:2020} adopted robustified ratio consensus proposed in \cite{2014:chrisThemisTAC} to propose a termination mechanism for average consensus with delays. Similarly, \cite{grammenos2021cpu} proposed the corresponding asynchronous termination algorithm in the context of a quadratic distributed optimization problem, in which the algorithm is executed only once. We adopt the asynchronous termination algorithm proposed in \cite{grammenos2021cpu} to compute  $z_i^{k+1}$, and we extend it to accommodate the repetition of this algorithm. The algorithm, under Assumption~\ref{assump_time_index_bound} is summarized in Algorithm~\ref{Algorithm_finitetimeRCwDelays}. Specifically, Algorithm~\ref{Algorithm_finitetimeRCwDelays} makes use of the following ideas:
	\begin{list4}
		\item Each node $v_{j}$ runs asynchronous ratio consensus in Sec.~\ref{arc}; in our case, we use initial conditions $y_{j}^0=x_j^{k+1} + \lambda_j^{k}/\rho$.
		\item At the same time, each node maintains two auxiliary states, $m_j^k$ and $M_j^k$, which are updated using asynchronous $\min$- and $\max$-consensus in Sec.~\ref{amc}, respectively. Both converge in $(1+\bar{\tau})D$  steps \cite{2013:Giannini}. 
		\item Every $(1+\bar{\tau})D$ steps each node checks whether $\|M_j^k-m_j^k\|<\epsilon$. If this is the case, then the ratios for all nodes are close to the asymptotic value and it stops iterating. Otherwise, $m_j^k$ and $M_j^k$ are reinitialized to {$z_{j} ^k$}.
	\end{list4}

	\begin{algorithm}[t!]
		\caption{Distributed Finite-Time Termination for Asynchronous Ratio Consensus}
		\begin{algorithmic}[1]
			\STATE \textbf{Input:} A strongly connected digraph $\mathcal{G}=(\mathcal{V}, \mathcal{E})$.   $v_{j} \in \mathcal{V}$ knows its out-degree $\mathcal{N}_j^{+}$.
			$y_{j}^0$, $w_{j}^0=\mathbb{1}$, $ D, \bar{\tau}, \epsilon $.
			\STATE \textbf{set} $M_j^0=+\infty, ~ m_j^0=-\infty,  {\rm flag}_j^0=0, z_{j}= \frac{y_{j}^0}{w_{j}^0}$
			\STATE \textbf{set} $p_{lj}=\frac{1}{1+\mathcal{D}_j^+}$, $\forall~v_l\in \mathcal{N}_j^{+} \cup \{v_{j}\}$ (zero otherwise)
			\FOR{$k \geq 0$}
			\WHILE{${\rm flag}_j^k=0$}
			\IF{$k \mod (1+\bar{\tau})D =0$ and $k\neq 0$}
			\IF{$\|M_j^k-m_j^k\|< \epsilon$}
			\STATE \textbf{set} ${\rm flag}_j^k = 1$
			\ENDIF
			\STATE \textbf{set} $M_j^k=m_j^k=z_{j}^k = \frac{y_{j}^k}{w_{j}^k}$
			\ENDIF
			\STATE  \textbf{broadcast} to all $v_l \in \mathcal{N}_j^{+}$: \newline $p_{lj}y_{j}^k$, $p_{lj}w_{j}^k$, $M_j^k$, $m_j^k$
			\STATE  \textbf{receive} from all $v_{i} \in \mathcal{N}_j^{-}[k]$
			: \newline $p_{ji}y_{i}^k$, $p_{ji}w_i^k$, $M_i^k$, $m_i^k$
			\STATE  \textbf{compute} \\
			\STATE $y_{j}^k\hspace{-0.1cm}\leftarrow \hspace{-0.1cm} p_{jj}y_{j}^k + \sum_{v_{i} \in \mathcal{N}^{-}_j} \sum_{r=0}^{\bar{\tau} } y_{ji}^{k-r}I_{k-r,ji}^{r}$
			\STATE $w_{j}^k\hspace{-0.1cm}\leftarrow \hspace{-0.1cm} p_{jj}w_{j}^k + \sum_{v_{i} \in \mathcal{N}^{-}_j} \sum_{r=0}^{\bar{\tau} } w_{ji}^{k-r}I_{k-r,ji}^{r}$
			\STATE $M_j^k\hspace{-0.1cm}\leftarrow \hspace{-0.1cm} \max_{v_{i}\in \mathcal{N}_j^{-} \cup \{v_{j}\}}\{ M_i[t_j(k)+\theta_{ij}(k)] \}$
			\STATE $m_j^k\hspace{-0.1cm}\leftarrow \hspace{-0.1cm} \max_{v_{i}\in \mathcal{N}_j^{-} \cup \{v_{j}\}}\{ m_i[t_j(k)+\theta_{ij}(k)] \}$
			\ENDWHILE
			\ENDFOR
		\end{algorithmic}
		\label{Algorithm_finitetimeRCwDelays}
	\end{algorithm}
	
As stated in Assumption~\ref{assump_time_bound}, the upper bound $ B $ on time-steps is guaranteed to be executed within $ T $ seconds (actual time). So, nodes have a more loose synchronization of every $ DT $ seconds. The $ \max- $ and $ \min- $ are checked every $ DT $ seconds and if they have converged in one of the checks, the next optimization step is initiated for all $ DT $ seconds, after the termination of the algorithm. This approach requires some loose form of synchronization, but the time scale is much more coarse.

	\subsection{Convergence analysis}\label{sec:convergence_analysis}
	The following Theorem~\ref{theorem:main} states that AsyAD-ADMM has $ O(\frac{1}{k}) $ convergence rate.
	\begin{theorem}[Convergence]\label{theorem:main}
		Let $ \{X^k, z^k, \lambda^k\} $ be the iterates in AsyAD-ADMM algorithm for problem~\eqref{objective_function}, where $ X^k = [{x_1^{k}}\T,{x_2^{k}}\T, \ldots, {x_n^{k}}\T]\T $ and $ \lambda^k = [{\lambda_1^{k}}\T,{\lambda_2^{k}}\T, \ldots, {\lambda_n^{k}}\T]\T $. Let $ \bar{X}^k = \frac{1}{k} \sum_{s=0}^{k-1}X^{s+1}, \bar{z}^k = \frac{1}{k} \sum_{s=0}^{k-1}z^{s+1} $ be respectively the ergodic average of $ X^k $.
		Considering a strongly connected communication graph, 
		under Assumptions \ref{assup_graph}-\ref{assump_time_bound}, the following relationship holds for any iteration $ k $ as
		\begin{align}
		0&\le L(\bar X^{k},\bar z^{k}, \lambda^{*})- L ( X^{*},z^{*},\lambda^{*}) \label{convergence_relationship}\\
		&\le \frac{1}{k}\left( \frac{1}{2\rho} \|\lambda^{*}-\lambda^{0}\|^2 + \frac{\rho}{2}\|X^{*}-z^{0}\|^2\right) + \mathcal{O}(\sqrt{n}\epsilon),\nonumber
		\end{align}
		where $ \epsilon $ is the $ z $ update~\eqref{dadmm_z} tolerance whose value is independent of the researched problem.
	\end{theorem}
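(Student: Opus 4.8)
The plan is to follow the classical ergodic analysis of ADMM (as in~\cite{boyd2011distributed,wei2012distributed}), while carrying throughout an extra perturbation term that accounts for the fact that the $z$-update is produced only \emph{approximately} by the finite-time consensus of Algorithm~\ref{Algorithm_finitetimeRCwDelays}. First I would record the first-order optimality conditions of the two primal subproblems. By Assumption~\ref{assup_convex} and \eqref{dadmm_x}, there is a subgradient $\xi_i\in\partial f_i(x_i^{k+1})$ with $\xi_i=-\lambda_i^{k}-\rho(x_i^{k+1}-z_i^k)$; substituting the dual update \eqref{dadmm_lamda} rewrites this as $\xi_i=-\lambda_i^{k+1}-\rho(z_i^{k+1}-z_i^k)$, and stacking over $i$ the subgradient inequality evaluated at $X^*$ gives an upper bound on $F(X^{k+1})-F(X^*)$ linear in $X^{k+1}-X^*$. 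For the $z$-subproblem \eqref{dadmm_z}, the \emph{exact} minimizer $\hat z^{k+1}=\Pi_{\mathcal C}(X^{k+1}+\lambda^k/\rho)$ would satisfy $\hat\lambda^{k+1}\in\partial g(\hat z^{k+1})$ with $\hat\lambda^{k+1}=\lambda^k+\rho(X^{k+1}-\hat z^{k+1})$. Under Assumptions~\ref{assup_graph},~\ref{assump_time_index_bound} and~\ref{assump_time_bound} the termination test $\|M_j^k-m_j^k\|<\epsilon$ (via the $\max$/$\min$-consensus of Sec.~\ref{amc}) guarantees that the terminated iterate lies in $\mathcal C$, hence $g(z^{k+1})=0$, and that it deviates from $\hat z^{k+1}$ by at most $\|z^{k+1}-\hat z^{k+1}\|\le\sqrt{n}\,\epsilon$ (the $\sqrt{n}$ arising from stacking $n$ blocks each of coordinatewise spread below $\epsilon$); consequently $\|\lambda^{k+1}-\hat\lambda^{k+1}\|\le\rho\sqrt{n}\,\epsilon$ as well, so the $z$-optimality holds only up to an additive defect of order $\sqrt{n}\,\epsilon$.

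Next I would combine the two inequalities to bound the Lagrangian gap $L(X^{k+1},z^{k+1},\lambda^*)-L(X^*,z^*,\lambda^*)$. Using $X^*=z^*$ and $g(z^{k+1})=g(z^*)=0$, this gap reduces to $F(X^{k+1})-F(X^*)+(\lambda^*)\T(X^{k+1}-z^{k+1})$. Inserting the subgradient bound and the dual relation $X^{k+1}-z^{k+1}=(\lambda^{k+1}-\lambda^k)/\rho$, each inner product should be reorganized, via the identity $2a\T b=\|a\|^2+\|b\|^2-\|a-b\|^2$ used already in \eqref{dadmm_z}, into telescoping differences. The target per-iteration inequality is
\begin{align*}
L(X^{s+1},z^{s+1},\lambda^*)-L(X^*,z^*,\lambda^*) \le{} & \tfrac{1}{2\rho}\big(\|\lambda^*-\lambda^{s}\|^2-\|\lambda^*-\lambda^{s+1}\|^2\big)\\
& +\tfrac{\rho}{2}\big(\|X^*-z^{s}\|^2-\|X^*-z^{s+1}\|^2\big)+E_s,
\end{align*}
where the residual $E_s$ collects every cross term carrying the defect $z^{s+1}-\hat z^{s+1}$ (equivalently $\lambda^{s+1}-\hat\lambda^{s+1}$) and is therefore bounded by $\mathcal{O}(\sqrt{n}\,\epsilon)$ uniformly in $s$.

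Then I would sum over $s=0,\ldots,k-1$. The first two groups on the right telescope, and after discarding the nonpositive terms $-\tfrac{1}{2\rho}\|\lambda^*-\lambda^{k}\|^2$ and $-\tfrac{\rho}{2}\|X^*-z^{k}\|^2$ only the initial-condition terms $\tfrac{1}{2\rho}\|\lambda^*-\lambda^0\|^2+\tfrac{\rho}{2}\|X^*-z^0\|^2$ survive, while the accumulated residual satisfies $\sum_{s=0}^{k-1}E_s\le k\,\mathcal{O}(\sqrt{n}\,\epsilon)$. Dividing by $k$ and invoking convexity of $L(\cdot,\cdot,\lambda^*)$ in $(X,z)$ with Jensen's inequality, so that $\tfrac1k\sum_s L(X^{s+1},z^{s+1},\lambda^*)\ge L(\bar X^{k},\bar z^{k},\lambda^*)$, yields exactly the upper bound in \eqref{convergence_relationship}; the $\mathcal{O}(\sqrt{n}\,\epsilon)$ term persists because the per-step defect does not shrink with $k$. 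The left inequality $0\le L(\bar X^{k},\bar z^{k},\lambda^*)-L(X^*,z^*,\lambda^*)$ is then immediate from the right half of the saddle-point relation \eqref{saddle_point} in Assumption~\ref{assup_saddel_point}, which holds for all $(X,z)$ and in particular at $(\bar X^{k},\bar z^{k})$ (feasible, since $\bar z^{k}\in\mathcal C$ by convexity).

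The main obstacle I anticipate is the bookkeeping of the inexact $z$-update: unlike textbook ADMM, the identity $\lambda^{k+1}\in\partial g(z^{k+1})$ fails, so I must quantify the defect $z^{k+1}-\hat z^{k+1}$ from the termination test and then verify that it enters the combined estimate only \emph{additively}, without contaminating the telescoping structure of the $\|\lambda^*-\lambda^{s}\|^2$ and $\|X^*-z^{s}\|^2$ terms. Establishing this is what guarantees that the perturbation contributes a constant $\mathcal{O}(\sqrt{n}\,\epsilon)$ after averaging rather than a term that grows with $k$ and destroys the $\mathcal{O}(1/k)$ rate.
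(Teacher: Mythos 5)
Your proposal follows essentially the same route as the paper's proof: subgradient optimality conditions for both primal subproblems, an explicit perturbation $e^{k+1}$ with $\|e^{k+1}\|\le\sqrt{n}\,\epsilon$ separating the actual $z$-iterate from the exact minimizer, the standard inner-product identity to produce telescoping $\|\lambda^*-\lambda^s\|^2$ and $\|X^*-z^s\|^2$ terms, summation and Jensen's inequality for the ergodic average, and the saddle-point inequality for the lower bound. The only step you leave implicit is that the cross term multiplying the defect (involving $\lambda^{s+1}$ and $z^*-\tilde z^{s+1}$) is bounded uniformly in $s$, which the paper also only asserts by citing the boundedness results in Appendix A of \cite{boyd2011distributed}.
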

	
	\begin{proof}
		See Appendix~\ref{proof:theorem:main}.
	\end{proof}
		\begin{remark}\label{remark_n_epsilon}
			Note that the convergence proof shows the convergence rate for the optimization steps. 
			The fact that the distributed algorithm is approximate, an additional term $ \mathcal{O}(\sqrt{n}\epsilon)$ is introduced, that it is a function of the error tolerance and the size of the network and that it influences the solution precision. 
	\end{remark}
	\begin{remark}\label{remark_comparison}
		The convergence proof here is basically different from the ones in~\cite{wei2012distributed} and \cite{khatana2020d} as the investigated problems are different. Specifically, in~\cite{wei2012distributed}, the proposed D-ADMM can be only applied to nodes with undirected graphs as the constraint $ AX=0 $ is needed to minimize the objective function~\eqref{problem_reformulated}, where the matrix $ A $ is related to the communication graph structure which must be undirected. Authors in \cite{khatana2020d} proposed the D-ADMM for digraphs with the same constraint $ \| x_i-x_j \|\le \epsilon, \epsilon>0 $. However, it can only be applied to synchronous case.
	\end{remark}
	\section{Numerical Example}\label{sec:examples}

	The distributed least square problem is considered as
	\begin{equation}\label{problem_lesatSquare}
	\operatorname*{argmin}_{x\in \mathbb{R}^{p}}  f(x) = \frac{1}{2}\sum_{i=1}^{n} \|A_ix-b_i\|^2,
	\end{equation}
	where $ A_i  \in \mathbb{R}^{q\times p} $ is only known to node $ v_i $, $ b_i \in \mathbb{R}^{q} $ is the measured data and $ x \in \mathbb{R}^{ p} $ is the common decision variable that needs to be optimized. For the automatic generation of large number of different matrices $ A_i $, we choose $ q = p $ to have the square $ A_i $. All elements of $ A_i $ and $ b_i $ are set from independent and identically distributed (i.i.d.) samples of standard normal distribution $ \mathcal{N}(0,1) $.

	We choose $ p=3 $ and  $ n=600 $ to have 600 nodes having a strongly connected digraph. We implement the D-ADMM-FTERC algorithm in~\cite{wei_themis_2021} as a benchmark to evaluate factors  {(i.e., $ \epsilon, \bar \tau $)} influencing AsyAD-ADMM performance as D-ADMM-FTERC is developed for calculating the optimal solution to distributed optimization problems for digraphs, i.e., all figures related to D-ADMM-FTERC are optimal.
	 However, D-ADMM-FTERC is synchronous and this is the reason we develop AsyAD-ADMM in this paper.
	
	 Fig.~\ref{fig_obj_comparison} demonstrates three points: (i) AsyAD-ADMM solutions are very close to the optimal one from D-ADMM-FTERC. (ii) From Fig.~\ref{fig_obj_comparison} (a) to (b), the smaller the value of $ \epsilon $ is, the closer AsyAD-ADMM solutions are to D-ADMM-FTERC one. This is reasonable as smaller value of $ \epsilon $ means better and more accurate ``approximate'' ratio consensus for $ z $ update~\eqref{dadmm_z} using Algorithm~\ref{Algorithm_finitetimeRCwDelays}. 
	 \begin{figure}[t]
	 \centering
		\includegraphics[width=0.9\columnwidth]{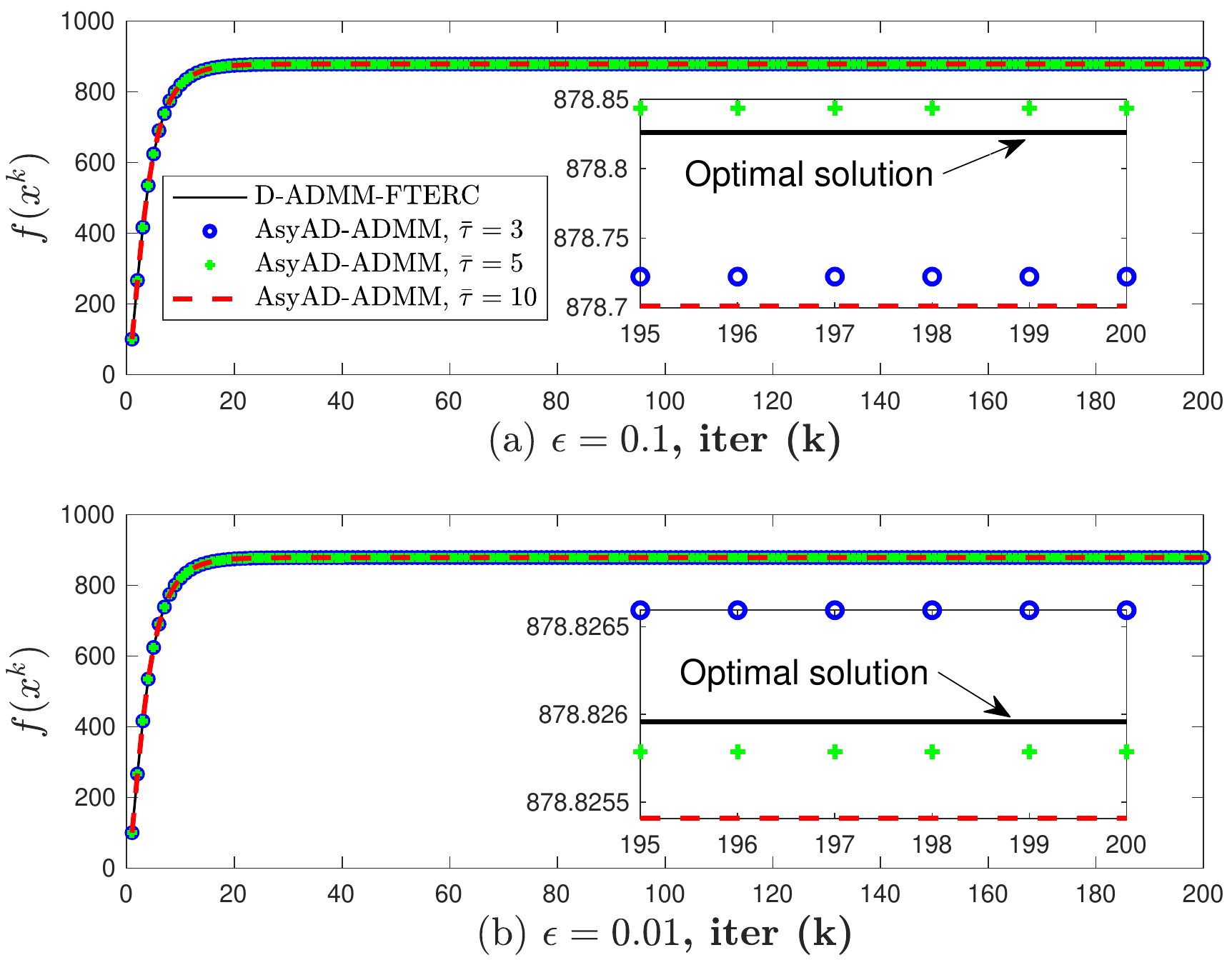}
		\caption{Comparison of solutions  to problem~\eqref{problem_lesatSquare} between AsyAD-ADMM  in this paper and synchronous D-ADMM-FTERC in~\cite{wei_themis_2021} without stopping condition
			 (steps 8-10 in Algorithm~\ref{algorithm:1},   $ k_{\max} = 200 $): subplots from (a) to (b) are
		related to different values of $ \epsilon $ and $ \bar \tau $ influencing the performance of AsyAD-ADMM.
	}
		\label{fig_obj_comparison}
	\end{figure}
	 This can be also validated in Fig.~\ref{fig_z_rcStep_comparison} where red and black lines are the $ z $ updates using AsyAD-ADMM and D-ADMM-FTERC, respectively. (iii) The values of $ \bar \tau $ (defined in Sec.~\ref{arc}) do not influence the precision of AsyAD-ADMM, which also makes sense as $ \bar \tau $ decides the degree of asynchrony, not the asynchronous ADMM precision. Nodes use this information to decide when to update as shown in step 6 of Algorithm~\ref{Algorithm_finitetimeRCwDelays}. The values of  $ \bar \tau $ do influence the iteration steps of Algorithm~\ref{Algorithm_finitetimeRCwDelays} as described in Table~\ref{tab_stepComparison} from which it shows the larger the value of $ \bar \tau $ is, the more iteration steps Algorithm~\ref{Algorithm_finitetimeRCwDelays} needs in each AsyAD-ADMM  optimization step.
	  For $ \epsilon = 0.01 $, the steps  are the same as $ 1000 $ because we set the iteration step upper bound for  Algorithm~\ref{Algorithm_finitetimeRCwDelays} as 1000; it means iteration steps are no less than 1000 when $ \epsilon \le 0.01 $ and $ \bar \tau \ge 3 $.
	 	\begin{figure}[t]
	 	\centering
		\includegraphics[width=0.9\columnwidth]{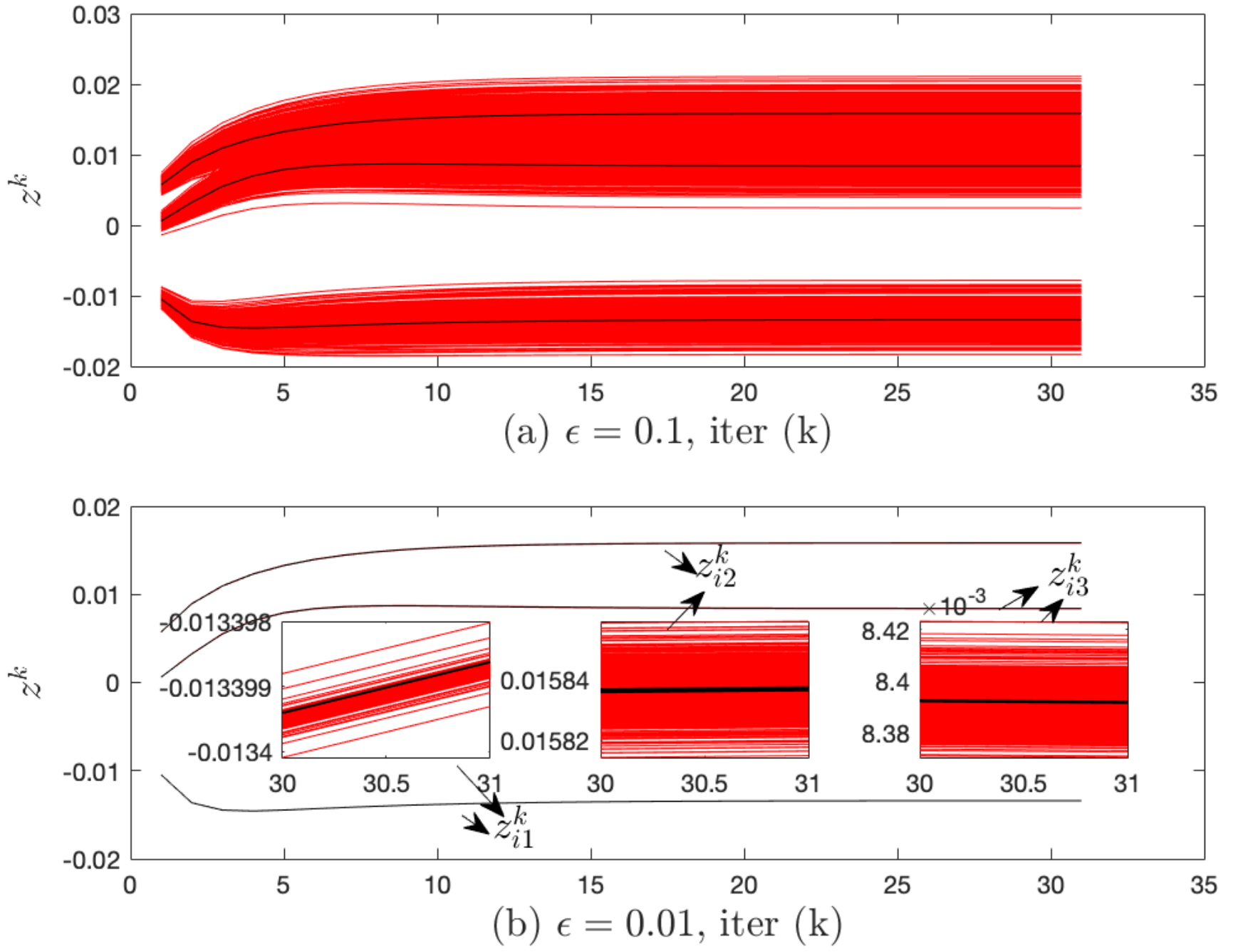}
		\caption{$ z^{k} $ ($ z^{k}_{ij}, i = 1,\ldots,n, j= 1,\ldots,p $) update in~\eqref{dadmm_z} with $ \bar \tau = 3 $ (red and black lines are from AsyAD-ADMM and D-ADMM-FTERC, respectively ) under stopping condition
			(steps 8-10 in Algorithm~\ref{algorithm:1},  absolute tolerance:  $  1\exp^{-4}$,
			relative tolerance:   $1\exp^{-2}$).
}
		\label{fig_z_rcStep_comparison}
	\end{figure}
	 Note that in Fig.~\ref{fig_z_rcStep_comparison}, we have AsyAD-ADMM with the stopping condition which is for the convenience of figure presentation.
	 The reason  that  Fig.~\ref{fig_z_rcStep_comparison} (a) and (b) have a big difference is $ \|z_i-z_j\|\le \epsilon $, which means a smaller $ \epsilon $ leads to a more accurate $ z $ update~\eqref{dadmm_z} using Algorithm~\ref{Algorithm_finitetimeRCwDelays}. And the reason is that a smaller $ \epsilon $ leads to a larger Algorithm~\ref{Algorithm_finitetimeRCwDelays} iteration step as one can check it is $ 9 $ and $ 1000 $  from Table~\ref{tab_stepComparison} for $\epsilon = 0.1 $  and $\epsilon = 0.01 $ in this example.
	Also from Fig.~\ref{fig_z_rcStep_comparison}, one can see the ADMM iteration numbers are the same for different values of $ \epsilon $ for AsyAD-ADMM with the stopping condition.
	 Furthermore, Table~\ref{tab_timeComparison} describes the running time comparison on an Intel Core i5 processor at 2.6 GHz with Matlab R2020b for different values of $ \epsilon $ and $ \bar \tau $ with $ k_{\max}=200 $.
	 One can see for $ \epsilon=0.01 $, from $ \bar \tau = 3, 5, 10 $, even though Algorithm~\ref{Algorithm_finitetimeRCwDelays} iteration steps are the same as 1000, the running time is larger and larger. 
	 \begin{table}[h]
		\centering
		\caption{Algorithm~\ref{Algorithm_finitetimeRCwDelays} iteration step in each Algorithm~\ref{algorithm:1} iteration.}
		\label{tab_stepComparison}
		{\small
			\begin{tabular}{clll}
				\hline
				\multicolumn{1}{|c|}{}             & \multicolumn{1}{c|}{$ \bar \tau = 3 $}       & \multicolumn{1}{c|}{$ \bar \tau = 5 $}      & \multicolumn{1}{l|}{$ \bar \tau =10 $}        \\ \hline
				\multicolumn{1}{|c|}{$ \epsilon = 0.1 $}   & \multicolumn{1}{l|}{9} & \multicolumn{1}{l|}{13} & \multicolumn{1}{l|}{23} \\ \hline
				\multicolumn{1}{|c|}{$ \epsilon = 0.01 $}   & \multicolumn{1}{l|}{1000} & \multicolumn{1}{l|}{1000} & \multicolumn{1}{l|}{1000} \\ \hline
				&                                 &                                 &                                  
		\end{tabular}}
	\end{table}
	 	\begin{table}[h]
		\centering
		\caption{AsyAD-ADMM Algorithm~\ref{algorithm:1} running time.}
		\label{tab_timeComparison}
		{\small
			\begin{tabular}{clll}
				\hline
				\multicolumn{1}{|c|}{}             & \multicolumn{1}{c|}{$ \bar \tau = 3 $}       & \multicolumn{1}{c|}{$ \bar \tau = 5 $}      & \multicolumn{1}{l|}{$ \bar \tau =10 $}        \\ \hline
				\multicolumn{1}{|c|}{$ \epsilon = 0.1 $}   & \multicolumn{1}{l|}{7.9677s} & \multicolumn{1}{l|}{12.0282
					s} & \multicolumn{1}{l|}{33.6401s} \\ \hline
				\multicolumn{1}{|c|}{$ \epsilon = 0.01 $}   & \multicolumn{1}{l|}{190.1420s} & \multicolumn{1}{l|}{246.9963s} & \multicolumn{1}{l|}{470.6576s} \\ \hline
				&                                 &                                 &                                  
		\end{tabular}}
	\end{table}
	 This is because Algorithm~\ref{Algorithm_finitetimeRCwDelays} running time is  the time gap between two consecutive iterations multiplies the iteration step while in step 6 of Algorithm~\ref{Algorithm_finitetimeRCwDelays}, $ (1+\bar{\tau})D $ is the time gap. 
	 Fig.~\ref{fig_X_Xstar} demonstrates two points. (i) It validates Theorem~\ref{theorem:main} that AsyAD-ADMM has $\mathcal{O}(\frac{1}{k}) $ convergence rate. (ii) It is in accordance with Fig.~\ref{fig_obj_comparison} and Fig.~\ref{fig_z_rcStep_comparison} that a smaller value of $ \epsilon $ has a better solution precision.
	 
	 What is more, from  Fig.~\ref{fig_obj_comparison} and Table~\ref{tab_timeComparison}, considering the AsyAD-ADMM solution precision and running time, it is better to have the value of $ \epsilon $ neither too large nor too small; for this example, $ \epsilon \in [0.01, 0.1)$ is a good value range.

	\begin{figure}[ht]
	\centering
		\includegraphics[width=0.9\columnwidth]{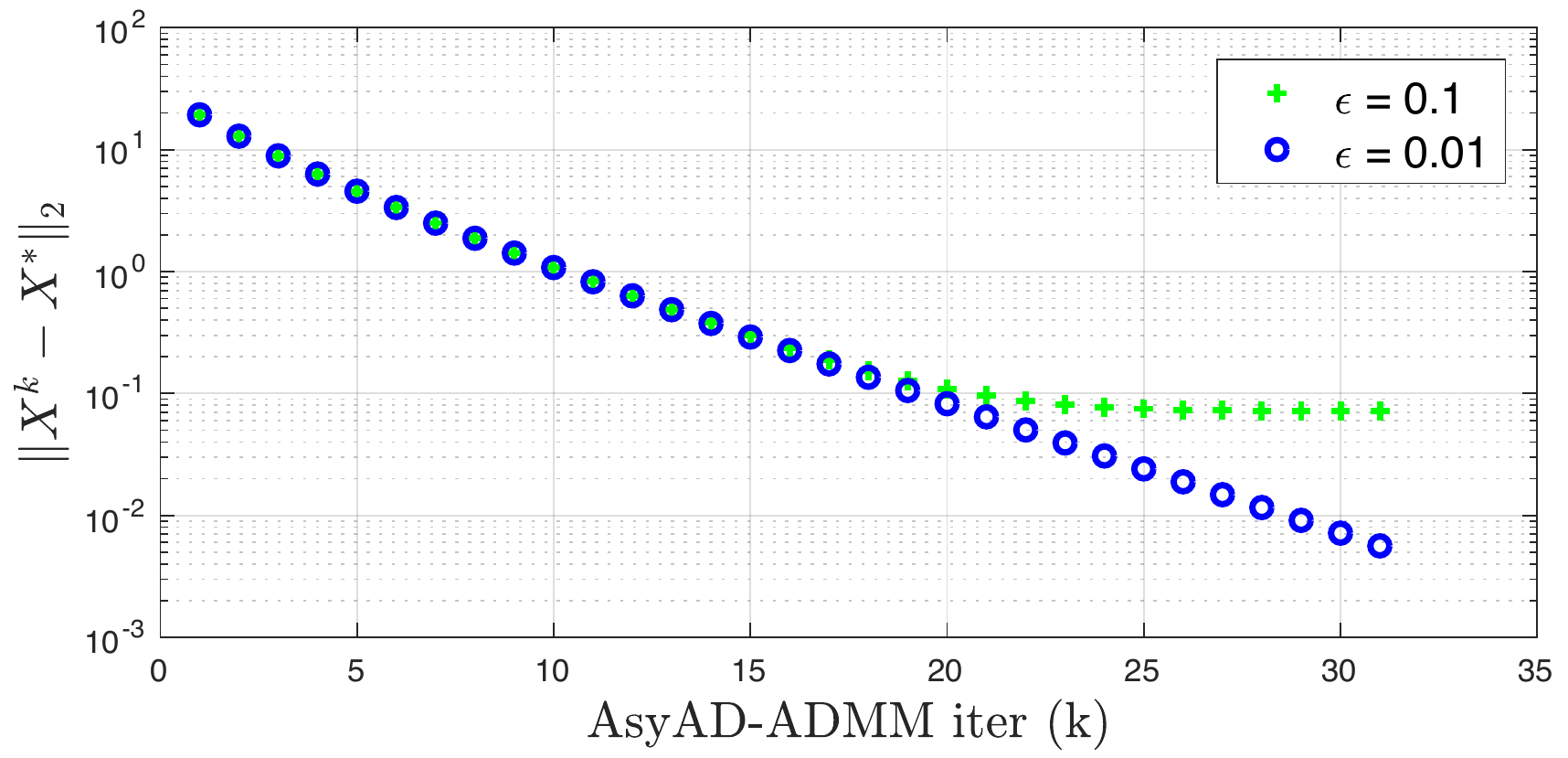}
		\caption{Comparison of the absolute errors against different values of $ \epsilon $ with $  \bar \tau = 3 $ and the stopping condition
			(steps 8-10 in Algorithm~\ref{algorithm:1},  absolute tolerance:  $  1\exp^{-4}$,
			relative tolerance:   $ 1\exp^{-2} $). 
		}
		\label{fig_X_Xstar}
	\end{figure}

	\section{Conclusions and Future Directions}\label{sec:conclusions}
	
	An asynchronous approximate distributed alternating direction method of multipliers algorithm is proposed to provide a solution for the distributed optimization problems allowing asynchronous information exchange among nodes in digraphs with assumptions of bounded time-index steps of asynchronous nodes and the known digraph diameter to all nodes.
	By proposing  the finite-time asynchronous approximate ratio consensus algorithm for one ADMM primal variable update, the solution is close to the optimal but acceptable.
	How to choosing algorithm parameters to have an aggressive performance is also discussed.
	
	Future work will focus on
	 designing a new asynchronous ADMM version without the communication graph diameter information
	  to compute the optimal solution for distributed optimization problems.

	\appendices
	
	\section{Proof of Theorem~\ref{theorem:main}}
	\label{proof:theorem:main}
	
	The analysis is inspired by \cite{wei2012distributed} and \cite{khatana2020d}. 
	From the second inequality of the saddle point of Lagrangian function~\eqref{saddle_point}, the first inequality in~\eqref{convergence_relationship} can be proved directly.
	
	We now prove the second inequality in~\eqref{convergence_relationship}. For each node $ v_i $, since $ x_i^{k+1} $ minimizes $ L_{\rho}(x,z^k, \lambda^k) $ in~\eqref{dadmm_x}, by the optimal condition, we have 
	\begin{equation}
	\begin{aligned}
	(x- x_i^{k+1})\T[h_i(x_i^{k+1}) + \lambda_i^k + \rho (x_i^{k+1} - z_i^k) ] \ge 0,
	\end{aligned}
	\end{equation}
	where $ h_i(x_i^{k+1}) $ is the sub-gradient of $ f_i $ at $ x_i^{k+1} $. By integrating $ x_i^{k+1} = (\lambda_i^{k+1}-\lambda_i^k)/\rho + z_i^{k+1} $ from~\eqref{dadmm_lamda} into the above inequality, we have
	\begin{equation}
	(x- x_i^{k+1})\T[h_i(x_i^{k+1}) + \lambda_i^{k+1} + \rho (z_i^{k+1} - z_i^k) ] \ge 0.
	\end{equation}
	The  above $ n $ inequalities can be written in compact form as
	\begin{equation}\label{optimal_condition_X}
	(X- X^{k+1})\T[\bar h(X^{k+1}) + \lambda^{k+1} + \rho (z^{k+1} - z^k) ] \ge 0,
	\end{equation}
	where $ \bar h(X^{k+1}) = [h_1\T(x_1^{k+1}), \ldots, h_n\T(x_n^{k+1})]\T $.
	Denote
	\begin{equation}\label{z_kPlus1_optimal}
	\tilde z_i^{k+1} \coloneqq  \frac{1}{n} \sum_{i=1}^{n}(x_i^{k+1} + \frac{\lambda_i^{k}}{\rho}  ),
	\end{equation}
	and $ \tilde z^{k+1} =  [\tilde z_1^{{(k+1)}\T}, \ldots, \tilde z_n^{{(k+1)}\T}]\T $.
	Based on ``approximate'' consensus Algorithm~\ref{Algorithm_finitetimeRCwDelays}, we denote the real $ z $ update~\eqref{dadmm_z} for the step $ k+1 $ as $ z^{k+1} \coloneqq \tilde z^{k+1} + e^{k+1},  e^{k+1} =  [ e_1^{{(k+1)}\T}, \ldots,  e_n^{{(k+1)}\T}]\T $ with $ \|e_i^{k+1}\| \le \epsilon, i = 1, \ldots, n, k=0, 1, \ldots $, i.e., $ \|e^{k+1}\|_2 \le \sqrt{n} \epsilon $.
	Since $ \tilde z^{k+1} $ minimizes $ L_{\rho}(x^{k+1},z, \lambda^k) $ in~\eqref{dadmm_z}, similarly, we have
	\begin{equation}\label{optimal_condition_z}
	\begin{aligned}
	(z- &\tilde z^{k+1})\T[\bar g(\tilde z^{k+1}) - \lambda^{k} - \rho (X^{k+1} -\tilde z^{k+1}) ]\\
	&=(z-\tilde z^{k+1})\T (\bar g(\tilde z^{k+1}) - \lambda^{k+1} -\rho e^{k+1} ) \ge 0
	\end{aligned}
	\end{equation}
	for all $ z \in \mathcal{C} $,
	where $ \bar g(\tilde z^{k+1}) $ is the sub-gradient of $ g $ at $ \tilde z^{k+1} $. As both $ F $ and $ g $ are convex, by utilizing the sub-gradient inequality, from \eqref{optimal_condition_X} and \eqref{optimal_condition_z}, we get 
	\begin{align}
	F(&X^{k+1}) - F(X) + g(\tilde z^{k+1}) - g(z) \nonumber\\
	\le& -(X- X^{k+1})\T\bar h(X^{k+1}) - (z- \tilde z^{k+1})\T\bar g(\tilde z^{k+1})\nonumber \\
	\le& 	(X- X^{k+1})\T[ \lambda^{k+1} + \rho (z^{k+1} - z^k) ]\nonumber\\
	& + (z-\tilde z^{k+1})\T ( - \lambda^{k+1} -\rho e^{k+1} )\nonumber \\
	=& \lambda^{{(k+1)}\T} (X- X^{k+1} -z+  z^{k+1}-e^{k+1}) \nonumber\\
	&+ \rho (X- X^{k+1})\T(z^{k+1} - z^k) -\rho (z-\tilde z^{k+1})\T   e^{k+1} \nonumber \\
	=& \lambda^{{(k+1)}\T} [X- X^{k+1} -z+  z^{k+1}] + \rho (X- X^{k+1})\T(z^{k+1} \nonumber\\
	&- z^k) -[\lambda^{k+1}+ \rho (z-\tilde z^{k+1})]\T   e^{k+1}. \label{proof1}
	\end{align}
	From \eqref{setC}, \eqref{indicator_function} and \eqref{z_kPlus1_optimal}, we have $ g(\tilde z^{k+1}) = g( z^{k+1}) $ as $ \{\tilde z^{k+1}, z^{k+1} \} \in \mathcal{C}$.
	Due to feasibility of the optimal solution $ (X^{*}, z^{*}) $, we obtain $ X^{*}- z^{*}=0 $.
	By setting $ X=X^{*}, z=z^{*}  $, \eqref{proof1} becomes
	\begin{align}
	F&(X^{k+1}) - F(X^{*}) + g(z^{k+1}) - g(z^{*})
	\le \lambda^{{(k+1)}\T} \label{proof2}\\& \times(z^{k+1}- X^{k+1} ) + \rho (X^{*}- X^{k+1})\T(z^{k+1} - z^k) -\eta^{k+1}. \nonumber
	\end{align}
	where $ \eta^{k+1} \coloneqq  [\lambda^{k+1}+ \rho (z^{*}-\tilde z^{k+1})]\T   e^{k+1} $.
	Note that in Appendix A of \cite{boyd2011distributed}, we know that both $ \lambda^{k+1} $ and $ z^{*}-\tilde z^{k+1} $ are bounded; and $ z^{*}-\tilde z^{k+1} $ goes to zero as $ k \rightarrow \infty $.
	Therefore, for $ k=1, 2, \ldots $, there exist constant numbers $ M_{\lambda} $ and $ M_{z} $ such that 
	\begin{align*}
	\|\lambda^{k}\| \le M_{\lambda}, \|z^{*}-\tilde z^{k}\| \le  M_{z}^{k}, M_{z}^{k} \le M_{z}, \lim_{k\to \infty}  M_{z}^{k} =0.
	\end{align*}
	Denote $ \bar F^{k+1} \coloneqq F(X^{k+1}) - F(X^{*}) + g(z^{k+1}) - g(z^{*}) + {\lambda^{*}}\T( X^{k+1}-z^{k+1}) $ and $ \phi^{k+1} \coloneqq \rho (X^{*}- X^{k+1})\T(z^{k+1} - z^k)$.
	Adding $ {\lambda^{*}}\T( X^{k+1}-z^{k+1} ) $ to both sides of \eqref{proof2} yields
	\begin{align}
	\bar F^{k+1}
	\le& (\lambda^{*}-\lambda^{k+1})\T (X^{k+1}-z^{k+1} ) + \phi^{k+1}- \eta^{k+1} \nonumber
	\\
	\le&\frac{1}{\rho}(\lambda^{*}-\lambda^{k+1})\T(\lambda^{k+1}-\lambda^{k} ) + \phi^{k+1} - \eta^{k+1},\label{proof3}
	\end{align}
	where the last equality is calculated from~\eqref{dadmm_lamda}. Recall an well-known equality law that 
	\begin{align}
	(a_1&-a_2)\T(a_3-a_4) =\frac{1}{2}(\|a_1-a_4\|^2- \|a_1-a_3\|^2 ) \label{equality_law}\\
	&+\frac{1}{2}(\|a_2-a_3\|^2- \|a_2-a_4\|^2 ), \forall a_1,a_2,a_3,a_4 \in \mathbb{R}^p.\nonumber
	\end{align}
	Then, by using equality~\eqref{equality_law}, \eqref{proof3} changes to
	\begin{align}
	\bar F^{k+1}
	\le&\frac{1}{2\rho}(\|\lambda^{*}-\lambda^{k}\|^2- \|\lambda^{*}-\lambda^{k+1}\|^2 
	- \|\lambda^{k+1}-\lambda^{k}\|^2 )	\nonumber\\& + \frac{\rho}{2} (\|X^{*}-z^k\|^2- \|X^{*}-z^{k+1}\|^2\nonumber\\& +\| X^{k+1}-z^{k+1}\|^2- \| X^{k+1}-z^k\|^2 ) - \eta^{k+1}\nonumber\\
	\le & \frac{1}{2\rho}(\|\lambda^{*}-\lambda^{k}\|^2- \|\lambda^{*}-\lambda^{k+1}\|^2 )\label{proof4}\\&+\frac{\rho}{2} (\|X^{*}-z^k\|^2- \|X^{*}-z^{k+1}\|^2) - \eta^{k+1},\nonumber
	\end{align}
	where the last inequality comes from using~\eqref{dadmm_lamda} and dropping the negative term $ -\frac{\rho}{2}\| X^{k+1}-z^k\|^2 $. Now, by using $ s \coloneqq k $, we change~\eqref{proof4} to another format as
	\begin{align}
	\bar F^{s+1}
	\le & \frac{1}{2\rho}(\|\lambda^{*}-\lambda^{s}\|^2- \|\lambda^{*}-\lambda^{s+1}\|^2 )\nonumber\\&+\frac{\rho}{2} (\|X^{*}-z^s\|^2- \|X^{*}-z^{s+1}\|^2) -  \eta^{s+1},\label{proof5}
	\end{align}
	which holds true for all $ s $. 
	Denote the constant $ \theta \coloneqq \|\lambda^{*}-\lambda^{0}\|^2/(2\rho)+\rho \|X^{*}-z^0\|^2/2 $.
	By summing~\eqref{proof5} over $ s=0,1,\ldots, k-1 $ and after telescoping calculation, we have
	\begin{align}
	\sum_{s=0}^{k-1}F(&X^{s+1}) - kF(X^{*}) + \sum_{s=0}^{k-1}g(z^{s+1}) - kg(z^{*}) \nonumber\\&+ {\lambda^{*}}\T\sum_{s=0}^{k-1}( X^{s+1}-z^{s+1}) \label{proof6}\\
	\le & \theta - \frac{1}{2\rho} \|\lambda^{*}-\lambda^{k}\|^2 -\frac{\rho}{2}  \|X^{*}-z^{k}\|^2 -\sum_{s=0}^{k-1} \eta^{s+1}.\nonumber
	\end{align}
	Due to the convexity of both $ F $ and $ g $, we get $ kF(\bar X^k) \le \sum_{s=0}^{k-1}F(X^{s+1}) $ and $ kg(\bar z^k) \le \sum_{s=0}^{k-1}g(z^{s+1}) $. Thus, by the definition of $ \bar X^k, \bar z^k $ and dropping the negative terms,
	\begin{align}
	kF&(\bar X^k) - kF(X^{*}) + kg(\bar z^k) - kg(z^{*}) + {\lambda^{*}}\T( k\bar X^k-k\bar z^k)\nonumber\\
	\le & \theta -\sum_{s=0}^{k-1}\eta^{s+1} 
	\le  \theta + k \underbrace{(M_{\lambda}+ \rho M_{z})\sqrt{n}\epsilon}_{\coloneqq \mathcal{O}(\sqrt{n}\epsilon)}.\label{proof7}
	\end{align}
	Based on $ X^{*}- z^{*}=0 $, \eqref{proof7} combined with the definition of Lagrangian function [cf.~\eqref{Lagrangian}] prove~\eqref{convergence_relationship}.

	\bibliographystyle{IEEEtran}
	\bibliography{bib_finite}
	
\end{document}